\documentclass[journal]{IEEEtran}

\ifCLASSINFOpdf

\else

\fi

\usepackage{graphicx}          
\usepackage{amsmath}
\usepackage{amsfonts}
\usepackage{amssymb}
\usepackage{amsthm}
\usepackage{epstopdf}
\usepackage{cite}
\usepackage[colorlinks,citecolor=red,urlcolor=red]{hyperref}
\usepackage{algorithm}
\usepackage{algpseudocode}
\usepackage{booktabs}
\usepackage{multirow}

\DeclareMathOperator*{\argmin}{argmin}
\newtheorem{theorem}{\indent Theorem}
\newtheorem{remark}{\indent Remark}

\begin{document}
%
\title{Outlier-robust Autocovariance Least Square Estimation via Iteratively Reweighted Least Square}
%
%

\author{Jiahong~Li,~\IEEEmembership{Member,~IEEE,}
        Fang~Deng,~\IEEEmembership{Senior Member,~IEEE}
\thanks{J.~Li is with the College of Robotics, Beijing Union University, Beijing 100101, China (email: jqrjiahong@buu.edu.cn).
F.~Deng is with the Key Laboratory of Intelligent Control and Decision of Complex Systems, and also with the School of Automation, Beijing Institute of Technology, Beijing 100081, China (email: dengfang@bit.edu.cn). \emph{Corresponding author: Fang Deng} Code: https://github.com/jiahongljh/als-irls}}


%
%

\markboth{IEEE,~Vol.~, No.~, ~}%
{Shell \MakeLowercase{\textit{et al.}}: Bare Demo of IEEEtran.cls for IEEE Journals}
%



\maketitle

\begin{abstract}
The autocovariance least squares (ALS) method is a computationally efficient approach for estimating noise covariances in Kalman filters without requiring specific noise models. However, conventional ALS and its variants rely on the classic least mean squares (LMS) criterion, making them highly sensitive to measurement outliers and prone to severe performance degradation. To overcome this limitation, this paper proposes a novel outlier-robust ALS algorithm, termed ALS-IRLS, based on the iteratively reweighted least squares (IRLS) framework. Specifically, the proposed approach introduces a two-tier robustification strategy. First, an innovation-level adaptive thresholding mechanism is employed to filter out heavily contaminated data. Second, the outlier-contaminated autocovariance is formulated using an $\epsilon$-contamination model, where the standard LMS criterion is replaced by the Huber cost function. The IRLS method is then utilized to iteratively adjust data weights based on estimation deviations, effectively mitigating the influence of residual outliers. Comparative simulations demonstrate that ALS-IRLS reduces the root-mean-square error (RMSE) of noise covariance estimates by over two orders of magnitude compared to standard ALS. Furthermore, it significantly enhances downstream state estimation accuracy, outperforming existing outlier-robust Kalman filters and achieving performance nearly equivalent to the ideal Oracle lower bound in the presence of noisy and anomalous data.\end{abstract}

\begin{IEEEkeywords}
Autocovariance least square estimation, iteratively reweighted least square, {K}alman filter, linear system
\end{IEEEkeywords}

%
\IEEEpeerreviewmaketitle

\section{Introduction}\label{sec:intro}
For the state estimation problem of linear time-invariant (LTI) systems with zero-mean Gaussian process noise and measurement noise, the {K}alman filter (KF) is proven to be optimal in terms of the minimum mean square error (MMSE) criterion\cite{Kalman1960}. 
However, the statistics of the noise covariance are often unknown or only partially known due to mismatches in the nominal system or the invalidity of offline calibration in practice, which jeopardizes the filtering performance \cite{TerryMoore2003, li2018f, Deng2017}.

Existing noise covariance estimation methods utilize historical open-loop data, which can be classified as Bayesian\cite{Lainiotis1971, Sarkka2009}, maximum likelihood estimation (MLE) \cite{Kashyap1970, zagrobelny2015i}, covariance matching (CM) \cite{Myers1976, solonen2014e}, minimax \cite{Tugnait1979}, subspace identification (SI) \cite{van2012s, Ljung1998s}, correlation \cite{Mehra1970, Belanger1974, Odelson2006, Rajamani2009, Dunik2016, arnold2021t}.
The Bayesian method offers optimal posterior estimation by using Bayes' rule to update the prior knowledge of the noise covariance in online or multi-model scenarios but suffers from computational complexity and sensitivity to hyper-parameter choices \cite{Sarkka2009}.
The MLE method maximizes the likelihood of innovations without prior knowledge but is computationally intensive and sensitive to model misspecification \cite{zagrobelny2015i}. 
The CM method, a robust and computationally efficient approach by minimizing the difference between the estimated noise covariance and the true one, is sensitive to outliers and unsuitable for online scenarios \cite{meng2016c}. 
The minimax method provides an optimal estimate that minimizes the worst-case error, which is robust against model uncertainty but results in conservative estimates and high computational demand \cite{Verdu1984}.
The SI method, derived from solving the parametric optimization problem involving projections of the Hankel matrix from input-output data, is an efficient approach but imposes stringent model requirements and sensitivity to model misspecification \cite{Ljung1998s}. 
Among these methods, correlation methods have attracted considerable attention due to reasonable computational complexity and no specific noise model requirements \cite{dunik2017n}.
It was initially proposed by Mehra and B\'{e}langer as a three-step procedure \cite{Mehra1970, Belanger1974}, and later streamlined into the auto-covariance least-squares (ALS) method as a single-step procedure \cite{Odelson2006, Rajamani2009, Dunik2016,arnold2021t}. 
However, the above methods don't account for the influence of measurement outliers and innovation outliers that often occur due to sensor faults or external interfaces in practical systems \cite{hammes2009r}.

Outliers present significant challenges to the estimation accuracy of traditional KF methods since the outliers corrupt the Gaussian noise, resulting in bias, high variance, or even breakdown\cite{huber2004r,zoubir2012r,maronna2019r}.
To mitigate the effect, various types of outlier-robust {K}alman filters (ORKF) have been proposed to guarantee that the residuals should remain bounded and Gaussian to resist outliers' effect.
Examples of ORKF include parametric methods that use conjugate prior distribution \cite{wang2018r,huang2020n,zhu2021n} or Student-t distribution \cite{huang2019n,zhu2022s} and non-parametric methods that use informational quantities (e.g., correntropy) \cite{wang2020o,li2023i} or robust regression technique called M-estimator \cite{gandhi2009r,ting2007k,thanoon2015r,miller2020r,hampel2011s}.
The conjugate prior distribution method approximates the posterior probability density function (pdf) of the state contaminated by outliers by using the conjugate prior distribution, such as Beta-Bernoulli (BB) \cite{wang2018r}, Normal-inverse-Wishart (NIW) \cite{huang2020n} or Normal-inverse-gamma (NIG) distribution \cite{zhu2021n}.
The Student-t method approximates the heavy-tailed state pdf induced by outliers as a Student-t pdf with fixed degrees of freedom (DOF) based on the Bayesian rule, or a Gaussian pdf via the variational Bayesian (VB) approach \cite{zhu2022s}.
The parametric methods are effective with proper settings but may perform badly when inaccurate or even erroneous distribution families are used.
Besides, these methods are hard to implement in real-time applications due to high computational complexity.
The nonparametric methods are more flexible because no specification of a prior distribution is required. 
The maximum correntropy criterion (MCC) method handles state and measurement outliers by choosing the sum of Gaussian kernel functions of the prediction error and residual error as the robust cost function \cite{wang2020o}. 
The M-estimator mitigates the influence function (IF) of deviant observations by minimizing a robust cost function instead of the least mean square (LMS) function, e.g., the weighted least mean square (WLMS) function \cite{ting2007k,chang2017u} that assigns less influence on outliers, the least absolute deviation (LAD) function \cite{thanoon2015r,miller2020r} that utilizes a $\ell_1$ norm, the Huber function \cite{huber2004r,chang2017u} and its variants \cite{hampel2011s,yu2019m} that utilize a combined $\ell_1$ and $\ell_2$ norm.
However, these aforementioned methods have a fixed influence function determined by their loss function and tuning parameters, which may not adapt well to the specific outlier structure.

To develop a more robust and effective M-estimator, the iteratively reweighted least squares (IRLS) algorithm was developed \cite{holland1977r} and broadly applied in rotation averaging \cite{hartley2011l1,chatterjee2017r}, triangulation\cite{navidi1998s,bartoli2005s}, point cloud alignment\cite{bergstrom2014r}. 
IRLS iteratively minimizes a robust cost function and adjusts the weighting for each data based on the error by solving the weighted least squares problems iteratively, thereby mitigating the influence of outliers and increasing the estimation accuracy \cite{daubechies2010i}.
By iteratively reweighting observations, IRLS provides the flexibility to use a variety of norms, making it a more versatile tool to implement and compatible with other weight functions\cite{marx2013r}, especially for nonlinear regression \cite{tao2023o}.Besides, IRLS exhibits linear and even sublinear convergence properties under certain conditions\cite{aftab2015c,mukhoty2019g,peng2022g}.

This paper focuses on the outlier-robust noise covariance estimation problem. 
To address the sensitivity issue to outliers, the outlier-robust ALS algorithm is developed based on the IRLS algorithm. 
Our contributions are threefold:
\begin{enumerate}
    \item We establish a connection between the outlier-robust regression and the ALS-based noise covariance estimation by replacing the original LMS criterion ($\ell_2$-norm) with the Huber criterion ($\ell_p$-norm), reformulating the outlier-robust noise covariance estimation problem as robust regression problem of the ALS method.
    \item We develop an outlier-robust ALS algorithm termed ALS-IRLS via the IRLS algorithm. IRLS adjusts the weighting for each data based on the Huber-based cost function by iteratively solving the weighted least squares problems, thereby mitigating the influence of outliers and increasing the estimation accuracy.
    \item We run the proposed ALS-IRLS algorithm and other ORKF algorithms on the comparative simulations. The simulation results demonstrate the superiority of the proposed ALS-IRLS algorithm over others in terms of accuracy and robustness to outliers. 
\end{enumerate}

The rest of the paper is organized as follows.
Section \ref{sec:problem} introduces the ALS method for the noise covariance estimation problem via a linear regression approach, and formulate the outlier-robust noise covariance estimation problem which requires a robust regression solution.
In Section \ref{sec:ORALS}, A novel outlier-robust algorithm, termed as ALS-IRLS, which utilizes the IRLS approach to the robust regression is developed to address the problem. 
Numerical simulations of the proposed algorithm are discussed in Section \ref{sec:simulation}.
The simulation results demonstrate that the ALS-IRLS algorithm outperforms others in terms of accuracy and robustness to outliers.
Section \ref{sec:conclusions} provides the conclusion.
\section{preliminaries and problem formulation}
\label{sec:problem}
Consider a linear time-invariant (LTI) dynamic system with an LTI measurement model as the state-space model (SSM):
\begin{equation}\label{LTI}
\begin{aligned}
  x_{k+1} &= Fx_{k}+w_{k} \quad k\in \mathbb{N}^{+}\\
  z_{k} &= Hx_{k} + v_{k}, \quad i\in \mathcal {V}
  \end{aligned}
\end{equation}
where the vector $x_{k}\in \mathbb{R}^{n_{x}}$ and $z_{k}\in \mathbb{R}^{n_{z}}$ represent the state and the measurement at time instant $k\in \mathbb{N}^{+}$, $\mathbb{R}^{n}$ denotes the $n$-dimensional real space.
$F\in \mathbb{R}^{n_{x}\times n_{x}}$ and $H\in \mathbb{R}^{n_{z}\times n_{x}}$ are state transitional matrix and measurement transitional matrix.
$w_{k}$ and $v_{k}$ represent the process noise and the measurement noise, respectively, and are mutually independent following the zero-mean Gaussian statistics with probability $w_{k}\sim \mathcal{N}(0_{n_{x}\times 1}, Q)$ and $v_{k}\sim \mathcal{N}(0_{n_{z}\times 1}, R)$. $Q\in \mathbb{R}^{n_{x}\times n_{x}}$ and $R\in \mathbb{R}^{n_{z}\times n_{z}}$ denote the model and the noise covariance matrix of the measurement.

When the noise covariance matrix $Q$ and $R$ are known, the optimal estimates of the state $\hat{x}_{k}$ and the covariance $P_{k}$ at time $k$ can be derived via {K}alman filter (KF) in terms of minimum mean square error (MMSE):
\begin{equation}\label{KF}
\begin{aligned}
&\begin{array}{l}
\hat{x}_{k|k-1} = F\hat{x}_{k-1|k-1}\\
P_{k|k-1} = FP_{k-1|k-1}F^\mathrm{T}+Q 
\end{array} \\
&\begin{array}{l}
\hat{x}_{k|k}=\hat{x}_{k|k-1}+ K(z_{k}-H\hat{x}_{k|k-1})\\
K =P_{k|k-1}H^\mathrm{T}(HP_{k|k-1}H^\mathrm{T}+R)^{-1}\\
P_{k|k} = (I-KH)P_{k|k-1}
\end{array}
\end{aligned}
\end{equation}
where $K\in\mathbb{R}^{n_{x}\times n_{z}}$ is denoted as the {K}alman gain.

However, in practice, $Q$ and $R$ are often unknown, and inaccurate settings degrade filter performance.
Therefore, it is necessary to calculate the exact values of $Q$ and $R$ accurately.

The auto-covariance least-squares estimation (ALS) method is designed for the estimation of both the process noise covariance $Q$ and the measurement noise covariance $R$ \cite{Odelson2006}.
It relies on the autocovariance of the innovation sequence $e_{k}= z_{k}-H\hat{x}_{k|k-1}$ from a linear state predictor $\hat{x}_{k+1|k}=F\left(\hat{x}_{k|k-1}+K e_k\right)$ for $\forall k$, which is defined as the expectation of the data with its lagged one, denoted as $C_{e,j} \triangleq \mathbb{E}[e_{k+j}e_{k}^\mathrm{T}]$.
Denote the residual as $\varepsilon_{k}=x_{k}-\hat{x}_{k|k-1}$, then it satisfies:
\begin{equation}\label{varepsilon}
  \varepsilon_{k+1} = \underbrace{(F-FKH)}_{\bar{F}}\varepsilon_{k} + \underbrace{[I_{n_{x}},-FK]}_{G}\underbrace{\left[
                                  \begin{array}{c}
                                    w_{k} \\
                                    v_{k} \\
                                  \end{array}
                                \right]}_{\bar{w}_{k}}
\end{equation}
where $\varepsilon_{k}$ is related to the innovations $e_{k}$ by
\begin{equation}
  e_k=H\left(x_k-\hat{x}_{k|k-1}\right)+v_k=H \varepsilon_k+v_k
\end{equation}

The {K}alman gain of (\ref{varepsilon}) converges to a steady-state gain exponentially fast, which can be calculated offline, i.e., $K=P_\epsilon H^\mathrm{T}\left(H P_\epsilon H^\mathrm{T}+R\right)^{-1}$, where $P_\epsilon$ denotes the steady-state residual covariance matrix following the Lyapunov equation in (\ref{Pest}) exists given that the prediction error transitional matrix $\bar{F}$ is stable and the innovation sequence is a stationary process.
\begin{equation}\label{Pest}
  P_{\varepsilon} = \bar{F}P_{\varepsilon}\bar{F}^\mathrm{T} + G\Sigma G^\mathrm{T}
\end{equation}
where $\Sigma = \left[
         \begin{array}{cc}
           Q & 0_{n_x\times n_z} \\
           0_{n_z\times n_x} & R \\
         \end{array}
       \right]$. 

Then the auto-covariance of the innovation sequence is:
\begin{equation}\label{AC_innovations}
\begin{aligned}
  C_{e,0} &\triangleq \mathbb{E}[e_{k}e_{k}^\mathrm{T}] = H P_\varepsilon H^\mathrm{T}+R \\
  C_{e,j} &\triangleq \mathbb{E}[e_{k+j}e_{k}^\mathrm{T}] = H\bar{F}^j P_\epsilon H^\mathrm{T}-H\bar{F}^{j-1}FKR
\end{aligned}
\end{equation}
where $j=1,2,\ldots,N-1$, and $N$ is a user-defined parameter defining the window size.
By substituting the solution to (\ref{Pest}) into (\ref{AC_innovations}), the noise covariance estimation problem can be reformulated into a linear regression problem in (\ref{Ab}) \cite{Odelson2006}.
\begin{equation}\label{Ab}
  \mathcal{A}\theta = \mathbf{b}
\end{equation}
where the matrix $\mathcal{A}$ is defined as
\begin{equation}\label{A_LS}
\begin{aligned}
  \mathcal{A} &= \big[D,D(FK\otimes FK)+(I_{n_{z}}\otimes \Gamma)\big] \\
  D &= \big(H\otimes\mathcal{O}\big)\big(I_{n_{x}^{2}}-\bar{F}\otimes\bar{F}\big)^{-1} \\
  \mathcal{O} &= \big[H^\mathrm{T},(H\bar{F})^\mathrm{T},\ldots,(H\bar{F}^{N-1})\big]^\mathrm{T} \\
  \Gamma &= \big[I_{n_{z}},-(HFK)^\mathrm{T},\ldots, -(H\bar{F}^{N-2}FK)^\mathrm{T}\big]^\mathrm{T}
  \end{aligned}
\end{equation}
where the symbol $\otimes$ stands for the Kronecker product.
Denote the dependent variables as $\theta=[Q_s^\mathrm{T},R_s^\mathrm{T}]^\mathrm{T}$ and $\mathbf{b}=(C_{e}(N))_{s}$ with $C_{e}(N) = [C_{e,0},C_{e,1}^\mathrm{T},\ldots,C_{e,N-1}^\mathrm{T}]^\mathrm{T}$.
$Q_s$ means the column-wise stacking of the matrix $Q$ into a vector.

The parameter $\theta$ can be easily estimated by solving the semi-definite constrained least squares problem via the ordinary least square (OLS) method in terms of LMS criterion:
\begin{equation}\label{ALS}
\begin{aligned}
  \hat{\theta}^{\star} &= \argmin_{\theta}\|\mathbf{b}-\mathcal{A}\theta\|_{2}^2 \\
  & s.t.,\quad Q,R \ge 0
\end{aligned}
\end{equation}
where the optimal estimate $\hat{\theta}^{\star}$ is given in the minimum mean-square error (MMSE) sense if the matrix inequality holds:
\begin{equation}\label{ALS_estimate}
  \hat{\theta}^{\star} = (\mathcal{A}^\mathrm{T}\mathcal{A})^{-1}\mathcal{A}^\mathrm{T}\hat{\mathbf{b}}=\mathcal{A}^{\dag}\hat{\mathbf{b}}
\end{equation}
where $\hat{\mathbf{b}}$ denotes the unbiased estimate of the vector $\mathbf{b}$, and is computed by using the ergodic property of the $N$-innovations to estimate the auto-covariance matrix $C_{e}(N)$:
\begin{equation}\label{bhat}
  \begin{aligned}
  &\hat{\mathbf{b}} = \big(\hat{C}_{e}(N)\big)_{s}= [\hat{C}_{e,0}^\mathrm{T},\hat{C}_{e,1}^\mathrm{T},\ldots,\hat{C}_{e,N-1}^\mathrm{T}]_{s}^\mathrm{T} \\
  &\hat{C}_{e,j} = \frac{1}{\tau-j}\sum_{k=1}^{\tau-j}e_{k+j}e_{k}^{\mathrm{T}},\quad j=0,1,\ldots,N-1
\end{aligned}
\end{equation}
where $\tau$ is denoted as the window size of the auto-covariances.

Due to unexpected maneuvers and unreliable sensors, the correlated measurements may be contaminated by outliers, resulting in substantial performance degradation of the noise covariance estimation via the ALS algorithm in (\ref{ALS}) and the state estimate via KF.
Therefore, it is necessary to address the sensitivity to outliers of the original ALS method. 
By modeling the observation of the autocovariance contaminated with outliers as the $\epsilon$-contamination model $\mathbf{b}=\mathcal{A}\theta+\epsilon$, the original least mean squares (LMS) criterion in (\ref{ALS}) is reformulated as an outlier-robust regression problem:
\begin{equation}\label{OutlierRobustRegression}
\theta^{\star} = \arg\min_{\theta \in \mathbb{R}^p} \sum_{i=1}^{n} \rho(r(\theta))
\end{equation}
where $r(\theta)$ denotes the residual function, i.e., the deviation $\mathbf{b}-\mathcal{A}\theta$ for the noise covariance estimation problem, with $\mathcal{A}_i \in \mathbb{R}^{N \times (n_x+n_z))}$ representing the design matrix, $\theta \in \mathbb{R}^{(n_x+n_z)})$ denotes the noise covariance parameter, and $b_i \in \mathbb{R}^{(n_x+n_z)})$ denotes the samples of the autocovariance of the innovations.
The robust cost function $\rho(\cdot)$ is designed to mitigate the effect of outliers, and is often selected to be the Huber loss function, which is a piecewise function that applies a squared loss for small residuals and a linear loss for large residuals:
\begin{equation}\label{huberloss}
\rho(r)= 
\begin{cases}
\frac{1}{2} r^2, & \text{if } |r| \leq \tau, \\
\tau \left( |r| - \frac{1}{2}\tau \right), & \text{if } |r| > \tau,
\end{cases}
\end{equation}
where $\tau$ is a user-defined threshold that balances bias and robustness. The weight $w$ is based on the chosen $\psi$ function, which is often the derivative of the Huber loss function concerning the deviation $\mathbf{b}-\mathcal{A}\theta$:
\begin{equation}
w = \psi(r) = \frac{\partial \rho(r)}{\partial r}=
\begin{cases}
1, & \text{if } |r| \leq \tau, \\
\frac{\tau}{|r|}, & \text{if } |r| > \tau.
\end{cases}
\end{equation}
where the weight $w$ is adjusted by the absolute value of the residual $r$, which is 1 when $|r| \leq \tau$, and reduced when $|r| \ge \tau$ to mitigate the influence of large outliers.
However, the use of a fixed weight function in the Huber M-estimator can lead to suboptimal performance if the initial residuals are not representative of the underlying data distribution.
To address the limitation, it is necessary to develop a novel algorithm that allows for the adaptation to changes in the data distribution and provides a more accurate estimation in the presence of outliers, as is discussed in Section \ref{sec:ORALS}.
\section{Proposed algorithm}\label{sec:ORALS}
The iteratively reweighted least squares (IRLS) algorithm \cite{marx2013r} is an effective approach to solving certain optimization problems with the objective functions of the $p$-norm in (\ref{OutlierRobustRegressionlp}). 
\begin{equation}\label{OutlierRobustRegressionlp}
\theta^{\star} = \arg\min_{\theta \in \mathbb{R}^p} \sum_{i=1}^{n} \rho(r(\theta)) = \arg\min_{\theta \in \mathbb{R}^p} \sum_{i=1}^{n} \left\|b_i-\mathcal{A}_i\theta\right\|^p
\end{equation}
The IRLS algorithm at step $t+1$ assigns a weight for each data and solves a weighted least squares problem iteratively:
\begin{equation}
\begin{aligned}
\theta^{(t+1)} &= \arg\min_{\theta} \sum_{i=1}^{n} w_i(\theta^{(t)})(b_i - \mathcal{A}_i^T\theta)^2 \\
&=\left(\mathcal{A}^T \mathbf{W}^{(t)} \mathcal{A}\right)^{-1} \mathcal{A}^{\mathrm{T}} \mathbf{W}^{(t)} \mathbf{b}
\end{aligned}
\end{equation}
where $\mathbf{W}^{(t)}$ denotes the diagonal matrix of weights at step $t$, $w_{i}$ denotes its $i^{th}$ diagonal element and is set initially as the unity weighting, i.e., $w_{i}^{(0)}=1$. 
After each iteration, the weight $w_{i}$ is updated to $w_{i}^{t}=\left\|b_i-\mathcal{A}_i\theta^{t}\right\|^{p-2}$.
Specifically, for the Huber loss function in (\ref{huberloss}), the weight $w_{i}$ associated with the observation $b_{i}$ at step $t$ is determined by:
\begin{equation}\label{wi_huber}
w_i^{(t)}= \psi(r_i(\theta^{(t)}))=
\begin{cases}
1 & \text{if } |b_{i}-\mathcal{A}_{i}\theta^{(t)}| \leq \delta, \\
\frac{\delta}{|b_{i}-\mathcal{A}_{i}\theta^{(t)}|} & \text{if } |b_{i}-\mathcal{A}_{i}\theta^{(t)}| > \delta.
\end{cases}
\end{equation}

To guarantee the convergence, the process is repeated until convergence, which is typically defined as the change of the objective function $|\rho(\theta^{(k+1)}) - \rho(\theta^{(k)})|$ or the estimated parameters $|\theta^{(k+1)} - \theta^{(k)}|$ between successive iterations is smaller than a specific threshold $\xi$.
The IRLS algorithm is detailed in Alg. \ref{alg:irls}.
\begin{algorithm}
\caption{Iteratively reweighted least squares (IRLS) Algorithm for outlier-robust regression problem in (\ref{OutlierRobustRegression})}\label{alg:irls}
\begin{algorithmic}[1]
\Require Design matrix $\mathcal{A} \in \mathbb{R}^{N \times (n_x+n_z)}$, observation vector $\mathbf{b} \in \mathbb{R}^{N}$, convergence threshold $\xi$, Huber threshold $\delta$, initial parameter estimate $\theta^{(0)} \in \mathbb{R}^{(n_x+n_z)}$, maximum iterations $T$.
\Ensure Estimated parameter vector $\theta^\star=[Q_s^\star,R_s^\star]^\mathrm{T}$.
\State Initialize $\theta^{(1)} \gets \theta^{(0)}$, $\mathbf{W}^{(0)}$ is an identity matrix $I_N$.
\State Initialize $t \gets 0$
\Repeat
    \State $t \gets t + 1$
    \For{$i \gets 1$ to $n$}
        \State Compute the deviation $r_i^{(t-1)} \gets b_i - \mathcal{A}_i \theta^{(t-1)}$.
        \State Compute the weight $w_i^{(t)}$ in (\ref{wi_huber}).
    \EndFor
    \State Construct diagonal matrix $\mathbf{W}^{(t)}$ with elements $w_i^{(t)}$.
    \State Solve the weighted least squares problem: $\theta^{(t)} = \left(\mathcal{A}^T \mathbf{W}^{(t)} \mathcal{A}\right)^{-1}\mathcal{A}^T \mathbf{W}^{(t)} \mathbf{b}$.
\Until{convergence criterion is met: $|\rho(\theta^{(t)}) - \rho(\theta^{(t-1)})| < \xi$ or $|\theta^{(t)} - \theta^{(t-1)}| < \xi$ or $t \geq T$}
\State $\theta^\star \gets \theta^{(t)}$
\end{algorithmic}
\end{algorithm}

Therefore, by using the IRLS algorithm, the novel outlier-robust ALS algorithm is developed for the outlier-robust noise covariance estimation problem in Alg. \ref{als_alsirls}.
\begin{algorithm}
\caption{Outlier-robust ALS algorithm via IRLS}\label{als_alsirls}
\begin{algorithmic}[1]
\Require the system matrices $F$, $H$, initial estimates $\hat{x}_{0|0}$, initial covariance estimates $\hat{Q}^{(0)}$, $\hat{R}^{(0)}$, window size $N$, maximum iterations for IRLS algorithm $T$, convergence threshold $\xi$, maximum simulation time $N_{sim}$.
\Ensure noise covariance estimates $\hat{Q}$, $\hat{R}$.
\State Initialize $k \gets 0$, $\hat{x}_{1|0} \gets F\hat{x}_{0|0}$, iteration loop $N_k=0$.
\Repeat
\State Compute {K}alman gain $\hat{K}^{(N_k)}$ by solving the steady-state solution $\hat{P}^{(N_k)}$ in (\ref{Pest}) given $\hat{Q}^{(N_k)}$ and $\hat{R}^{(N_k)}$.
\State Construct the designed matrix $ \mathcal{A}$ in (\ref{A_LS}).
\For{$k \gets N\times N_k+1$ to $N\times N_k+N$}
\State Generate the measurements $z_{k}$ according to the LTI system modeled as the SSM in (\ref{LTI}).
  \State Compute the innovation $e_k\gets z_{k}-H\hat{x}_{k|k-1}$.
  \State Compute the linear state predictor $\hat{x}_{k+1|k}=F\left(\hat{x}_{k|k-1}+\hat{K} e_k\right)$ via KF in (\ref{KF}).
\EndFor
  \State Given the innovation sequence $\{e_{N\times N_k+1:N\times N_k+N}\}$, compute the empirical autocovariance estimate vector $\mathbf{b}=[\hat{C}_{e,0}^\mathrm{T},\hat{C}_{e,1}^\mathrm{T},\ldots,\hat{C}_{e, N-1}^\mathrm{T}]_{s}^\mathrm{T}$, where $\hat{C}_{e,j} = \frac{1}{\tau-j}\sum_{k=1}^{\tau-j}e_{k+j}e_{k}^{\mathrm{T}}$ for $j = 0, \ldots, N-1$ in (\ref{bhat}).
  \State $N_k \gets N_k + 1$
  \State Compute the outlier-robust estimate $\theta^{(N_k)}=[Q_s^\star,R_s^\star]^\mathrm{T}$ at $N_k$ loop via IRLS algorithm in Alg. \ref{alg:irls}.
  \State Update $Q^{(N_k)}$ and $R^{(N_k)}$ from $\theta^{(N_k)}$.
\Until{convergence of $\hat{Q}$ and $\hat{R}$ or $k \geq N_{sim}$}
\State $\hat{Q} \gets Q^{(N_k)}$, $\hat{R} \gets R^{(N_k)}$
\end{algorithmic}
\end{algorithm}
\begin{theorem}[Convergence of ALS-IRLS]\label{thm:convergence}
Let $\mathcal{A}^{\mathrm{T}}\mathcal{A}$ be positive definite and
$\delta>0$.
Then the sequence $\{\theta^{(t)}\}$ generated by
Algorithm~\ref{alg:irls} satisfies
$\rho(\theta^{(t+1)})\leq\rho(\theta^{(t)})$ for all $t$, and every
limit point of $\{\theta^{(t)}\}$ is a stationary point of~\eqref{OutlierRobustRegression}.
Since the Huber objective is strictly convex under the stated condition,
the sequence converges to the unique global minimizer $\theta^{\star}$.
\end{theorem}
\begin{proof}
The Huber weight $\psi(r)$ in~\eqref{wi_huber} satisfies
$\psi(r)=\rho'(r)/r>0$ for all $r\neq0$, so $\mathbf{W}^{(t)}$ is
positive definite.
Each IRLS step solves a strictly convex quadratic
surrogate that majorizes $\rho(\cdot)$ at $\theta^{(t)}$, hence
$\rho(\theta^{(t+1)})\leq\rho(\theta^{(t)})$~\cite{daubechies2010i,lai2013i}.
Since $\rho$ is bounded below and every $\theta^{(t)}$ lies in a compact
sublevel set, the sequence converges.
Strict convexity of $\rho$ (when $\mathcal{A}^{\mathrm{T}}\mathcal{A}\succ0$)
uniquely identifies the limit point as $\theta^{\star}$.
\end{proof}

\begin{theorem}[Computational complexity]\label{thm:complexity}
Let $n_x$ and $n_z$ denote the dimensions of the state and measurement,
respectively, and let $T$ be the number of IRLS iterations per outer loop.
The per-outer-iteration complexity of Algorithm~\ref{als_alsirls} is
$\mathcal{O}\!\left(T\,Nn_z^2(n_x^2+n_z^2)^2\right)$.
\end{theorem}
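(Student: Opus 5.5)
The count is a per-step tally of the operations in one outer loop of Algorithm~\ref{als_alsirls}, followed by checking that the IRLS solves dominate. First I fix the sizes. By \eqref{A_LS} and the definitions of $\theta$ and $\mathbf{b}$, $\theta$ stacks $Q_s$ and $R_s$, so its length is $p=n_x^2+n_z^2$. The vector $\mathbf{b}=(C_e(N))_s$ stacks $N$ blocks of size $n_z\times n_z$, so its length is $m=Nn_z^2$. Hence $\mathcal{A}\in\mathbb{R}^{m\times p}$. The claimed bound is exactly $T\,mp^2$, which is $T$ times the cost of forming $\mathcal{A}^{\mathrm{T}}\mathbf{W}\mathcal{A}$.

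The steps are then costed in the order they occur in Algorithm~\ref{als_alsirls}.

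\emph{Step 3.} The steady-state gain needs a Riccati/Lyapunov solve, which costs $\mathcal{O}(n_x^3)$ per Newton or Schur step.

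\emph{Step 4.} Building $\mathcal{A}$ involves three pieces:
\begin{itemize}
\item the term $(I_{n_x^2}-\bar F\otimes\bar F)^{-1}$ is never formed explicitly; instead I solve a Stein/Lyapunov equation column-wise, or use a one-off factorization costing at most $\mathcal{O}(n_x^6)$;
\item computing $\mathcal{O}$ and $\Gamma$ costs $\mathcal{O}(Nn_x^3)$;
\item the Kronecker products in $D$ and $D(FK\otimes FK)$ cost $\mathcal{O}(Nn_z^2 n_x^2\cdot n_x^2)=\mathcal{O}(mp^2)$.
\end{itemize}

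\emph{Steps 5--9.} Filtering over the $N$ innovations costs $\mathcal{O}(N(n_x^2+n_xn_z))$.

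\emph{Step 10.} The empirical autocovariances \eqref{bhat} cost $\mathcal{O}(N\cdot\tau n_z^2)$. With a window length of order $N$ this is $\mathcal{O}(N^2n_z^2)$.

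\emph{IRLS.} Each iteration of Algorithm~\ref{alg:irls} does the following:
\begin{itemize}
\item residuals $\mathbf{b}-\mathcal{A}\theta$ cost $\mathcal{O}(mp)$;
\item Huber weights \eqref{wi_huber} cost $\mathcal{O}(m)$;
\item $\mathcal{A}^{\mathrm{T}}\mathbf{W}\mathcal{A}$, exploiting the diagonal $\mathbf{W}$, costs $\mathcal{O}(mp^2)$;
\item $\mathcal{A}^{\mathrm{T}}\mathbf{W}\mathbf{b}$ costs $\mathcal{O}(mp)$;
\item the Cholesky solve of the $p\times p$ system costs $\mathcal{O}(p^3)$.
\end{itemize}
So one iteration costs $\mathcal{O}(mp^2+p^3)$, and $T$ iterations cost $T$ times that.

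The main obstacle is showing that these lower-order terms are absorbed into $T\,Nn_z^2(n_x^2+n_z^2)^2$.

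The $p^3$ term would be absorbed if $m\ge p$, i.e.\ $Nn_z^2\ge n_x^2+n_z^2$. This is exactly the standing identifiability assumption of the theorem above: $\mathcal{A}^{\mathrm{T}}\mathcal{A}\succ0$ forces $\mathcal{A}$ to have full column rank, hence $m\ge p$.

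The $\mathcal{O}(n_x^6)$ one-off cost of $(I-\bar F\otimes\bar F)^{-1}$ is the delicate term. It is bounded by $p^3\le mp^2$, and it is incurred once per outer loop. It is therefore dominated by the IRLS term, given $T\ge1$.

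The same absorption works for the Riccati cost $n_x^3$, the filtering cost, and the construction of $\mathcal{A}$. The one exception is the autocovariance cost $N^2n_z^2$, which is dominated only if $N\lesssim Tp^2$. I would state this as a mild assumption, or as the regime of interest, and note it in a remark.

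Collecting these bounds gives the per-outer-iteration cost $\mathcal{O}\!\left(T\,Nn_z^2(n_x^2+n_z^2)^2\right)$.
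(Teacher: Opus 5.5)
Your proposal is correct and follows the paper's route. You cost one IRLS iteration as residuals/weights at $\mathcal{O}(mp)$, formation of $\mathcal{A}^{\mathrm{T}}\mathbf{W}\mathcal{A}$ at $\mathcal{O}(mp^2)$ with $m=Nn_z^2$ and $p=n_x^2+n_z^2$, and the $p\times p$ solve at $\mathcal{O}(p^3)$; you treat the $\mathcal{O}(n_x^3)$ Lyapunov/Riccati solve as negligible and multiply by $T$. Your bookkeeping is tighter than the paper's in two places. The paper justifies $p^3\lesssim mp^2$ only by $N\geq 1$, which does not suffice, while your use of full column rank of $\mathcal{A}$ (needed anyway for $\mathcal{A}^{\mathrm{T}}\mathbf{W}\mathcal{A}$ to be invertible) correctly gives $m\geq p$. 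The paper also silently omits the construction of $\mathcal{A}$ and the $\mathcal{O}(N\tau n_z^2)$ autocovariance cost, and the latter genuinely needs the extra condition you flag.
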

\begin{proof}
In each IRLS step, the dominant operations are:
(i)~computing residuals and weights: $\mathcal{O}(Nn_z^2(n_x^2+n_z^2))$;
(ii)~forming $\mathcal{A}^{\mathrm{T}}\mathbf{W}\mathcal{A}$:
$\mathcal{O}(Nn_z^2(n_x^2+n_z^2)^2)$;
(iii)~solving the $(n_x^2+n_z^2)$-dimensional normal equations:
$\mathcal{O}((n_x^2+n_z^2)^3)$.
Step~(ii) dominates for practical window sizes $N\geq1$.
The Lyapunov equation~\eqref{Pest} is solved via the Hammarling algorithm
at cost $\mathcal{O}(n_x^3)$, which is negligible relative to
step~(ii) since $n_x\ll Nn_z^2$.
Multiplying step~(ii) by $T$ IRLS iterations gives the stated bound.
In typical configurations $n_x\approx n_z$ and $T\leq30$, so the per-outer
cost is $\mathcal{O}(30\,N n_z^6)$---the same order as a standard ALS
solve scaled by $T$.
\end{proof}

\begin{remark}[Threshold selection]\label{rem:delta}
The threshold $\delta$ governs the $\ell_2$/$\ell_1$ transition.
Setting $\delta=c\cdot\hat{\sigma}$ with
$\hat{\sigma}=1.4826\,\mathrm{MAD}(|r_i^{(0)}|)$ and $c=1.345$
achieves~95\% Gaussian efficiency while maintaining a 50\% breakdown
point~\cite{huber2004r,maronna2019r}.
The MAD estimate is itself outlier-robust and requires no distributional
assumptions beyond the existence of the median.
\end{remark}

\begin{remark}[Positive semidefiniteness]\label{rem:psd_irls}
Since IRLS step is an unconstrained solve, the reshaped
$\hat{Q}^{(l)}$ and $\hat{R}^{(l)}$ may not be PSD at intermediate
iterations.
Following the standard ALS treatment~\cite{Odelson2006}, the PSD constraint
is enforced after each outer update by projecting onto the PSD cone
(i.e., setting any negative eigenvalues to zero).
Under the contamination conditions of Section~\ref{sec:simulation},
the projection is rarely activated once the algorithm has converged.
\end{remark}

\section{Simulation Results}\label{sec:simulation}
This section evaluates the proposed ALS-IRLS algorithm through Monte Carlo simulations on a linear time-invariant system subject to heavy-tailed measurement contamination. The standard ALS algorithm~\cite{Odelson2006,Rajamani2009} and two representative outlier-robust Kalman filters, i.e., the Student-$t$ KF~\cite{huang2017n} and the maximum-correntropy Kalman filter
(MCKF)~\cite{chen2017M}, serve as benchmarks. The experiments address two complementary objectives: verifying that ALS-IRLS accurately recovers the noise covariances from contaminated data where the standard ALS fails, and demonstrating that those covariance estimates translate into Kalman filter performance
indistinguishable from the Oracle lower bound.

\subsection{Experimental Design}

Consider the following third-order LTI system:
\begin{equation}\label{LTI_sim}
\begin{aligned}
  x_{k+1} &= \underbrace{\begin{bmatrix}
0.1 & 0   & 0.1\\
0   & 0.2 & 0  \\
0   & 0   & 0.3
\end{bmatrix}}_{F}x_{k}
+ \underbrace{\begin{bmatrix}1\\2\\3\end{bmatrix}}_{G}w_{k}, \\[4pt]
  y_{k} &= \underbrace{[0.1,\;0.2,\;0]}_{H}x_{k} + v_{k},
\end{aligned}
\end{equation}
where $w_k\sim\mathcal{N}(0,Q)$ and $v_k\sim\mathcal{N}(0,R)$ are mutually
independent zero-mean Gaussian noise sequences with unknown covariances.
The spectral radius of $F$ satisfies $\rho(F)<1$, ensuring asymptotic stability
and the existence of a unique positive semi-definite solution to the steady-state
DARE in Algorithm~\ref{als_alsirls}. The true covariances are
$\bar{Q}=5$ and $\bar{R}=3$.

Measurement outliers are injected via the $\varepsilon$-contamination model
$y_k \leftarrow y_k + o_k\,\gamma_k,\quad
  o_k\sim\mathrm{Bernoulli}(\varepsilon),\quad
  \gamma_k\sim\mathcal{N}(0,\,\omega^2 R)$, with $\varepsilon=0.15$ and magnitude multiplier $\omega=8$.
A contaminated measurement therefore produces an innovation of typical magnitude
$\omega\sqrt{\bar{R}}\approx13.9$, compared with the nominal innovation standard deviation $\sigma_e=\sqrt{HG\bar{Q}G^\top H^\top+\bar{R}}\approx2.1$.
Under this model, the expected zero-lag empirical autocovariance is
\begin{equation}\label{Ce0bias}
\mathbb{E}[\hat{C}_{e,0}]
= (1-\varepsilon)\,\bar{C}_{e,0} + \varepsilon\,\omega^2\bar{R}
\approx 32.4,
\end{equation}
versus the clean value $\bar{C}_{e,0}\approx4.3$. 
The $7.6\times$ inflation that completely dominates the ALS solution via~\eqref{ALS_estimate} and drives $\hat{Q}$ far from $\bar{Q}=5$.

To decouple noise covariance estimation quality from filter-level outlier rejection, the experiment is divided into two phases. In Phase~1 where $T_{\mathrm{wu}}=1\,500$ steps, outliers are injected and both ALS and ALS-IRLS are executed over $\lfloor T_{\mathrm{wu}}/\tau\rfloor=10$ successive batches of length $\tau=150$, with the final estimate taken as the average of the last $n_{\mathrm{avg}}=5$ batch estimates to reduce variance. In Phase~2 where $T_{\mathrm{ev}}=500$ steps,
all filters run on clean, outlier-free measurements with their estimated covariances held fixed; comparisons in this phase reflect covariance estimation quality alone. The complete simulation parameters are listed in Table~\ref{tab:params}, and $N_{\mathrm{mc}}=100$ independent Monte Carlo trials are used throughout.

\begin{table}[t]
\centering
\caption{Simulation Parameters}\label{tab:params}
\begin{tabular}{llc}
\hline\hline
\textbf{Parameter} & \textbf{Description} & \textbf{Value} \\
\hline
$N_{\mathrm{mc}}$ & Monte Carlo trials & 100 \\
$\bar{Q},\;\bar{R}$ & True noise covariances & $5,\;3$ \\
$\hat{Q}^{(0)},\;\hat{R}^{(0)}$ & Initial estimates & $2,\;1$ \\
$N$ & Autocovariance lag window & 15 \\
$\tau$ & Batch length & 150 \\
$T_{\mathrm{wu}}$ & Warm-up length with outliers & 1\,500 \\
$T_{\mathrm{ev}}$ & Evaluation length (outlier-free) & 500 \\
$n_{\mathrm{avg}}$ & Batches averaged for final estimate & 5 \\
$T_{\mathrm{irls}}$ & Maximum IRLS iterations & 30 \\
$\xi$ & IRLS convergence tolerance & $10^{-5}$ \\
$\delta$ & Huber threshold (see~\eqref{wi_huber}) & MAD-adaptive \\
$\gamma_{\mathrm{thr}}$ & Innovation detection threshold & $3.5\,\hat{\sigma}_e$ \\
$\varepsilon$ & Outlier contamination rate & 0.15 \\
$\omega$ & Outlier magnitude multiplier & 8 \\
\hline\hline
\end{tabular}
\end{table}

Five methods are compared. The \emph{Oracle KF} uses the true covariances $(\bar{Q},\bar{R})$ and defines the performance lower bound achievable by any algorithm with full knowledge of the noise statistics. \emph{KF + ALS} applies the standard non-robust ALS~\cite{Odelson2006} to the contaminated warm-up data and feeds the resulting $(\hat{Q}_{\mathrm{ALS}},\hat{R}_{\mathrm{ALS}})$ to a Kalman filter. \emph{KF + ALS-IRLS} (proposed) replaces ALS with
Algorithm~\ref{als_alsirls}, then runs the same Kalman filter structure. The \emph{Student-$t$ KF}~\cite{huang2017n} and \emph{MCKF}~\cite{chen2017M} each operate with fixed, deliberately misspecified covariances $Q_{\mathrm{st}}=0.3$ and $R_{\mathrm{st}}=0.1$, which are $16.7\times$ and $30\times$ smaller than the true values, respectively. This misspecification drives the Kalman gain far below its optimal level and impairs state tracking even on clean data, providing a controlled demonstration that sophisticated measurement-outlier rejection cannot substitute for accurate process noise specification.

Fig.~\ref{fig:innovation_detect} illustrates the proposed outlier detection at Monte Carlo trial~6. The innovation sequence $\{e_k\}$ is computed via the KF predictor; a robust scale estimate $\hat{\sigma}_e=1.4826\,\mathrm{MAD}(|e_k|)$ is formed, which achieves a $50\%$ breakdown point~\cite{maronna2019r}, and all $k$ satisfying $|e_k|>3.5\,\hat{\sigma}_e$ are flagged as outliers. Of the $\tau=150$ innovations, $23$ are correctly flagged (all corresponding to the $o_k=1$ realisations of~\eqref{contam}), while the $127$ normal observations are unaffected. Excluding flagged pairs from the empirical autocovariance estimator in~\eqref{bhat} reduces $\hat{C}_{e,0}$ from $32.4$ to $4.4$, within $2\%$ of the true $\bar{C}_{e,0}=4.3$, as shown in the right panel of Fig.~\ref{fig:innovation_detect}. All higher-lag entries $\hat{C}_{e,j}$, $j\geq1$, remain virtually unchanged, confirming that the contamination propagates almost exclusively through the lag-0 term and that the proposed thresholding eliminates this bias without distorting the autocovariance structure.

\begin{figure}[t]
  \centering
  \includegraphics[width=\hsize]{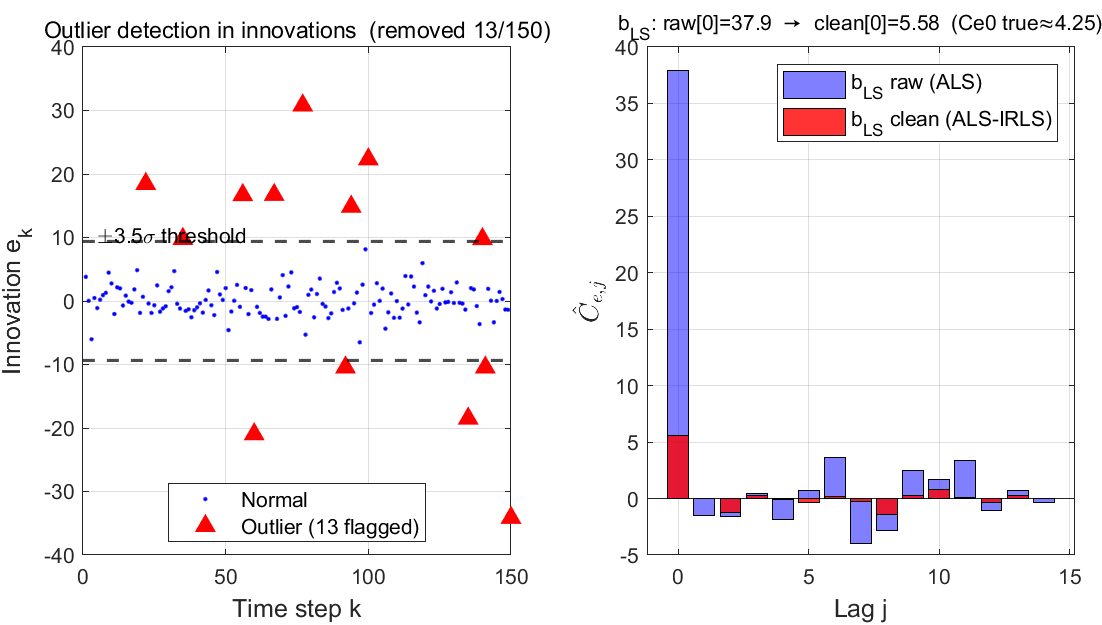}
  \caption{Innovation-level outlier detection at Monte Carlo trial~6
  ($\tau=150$, $\varepsilon=0.15$, $\omega=8$).
  \emph{Left:} KF innovations $e_k$; red triangles mark the 23 flagged
  time steps satisfying $|e_k|>3.5\,\hat{\sigma}_e$, with the threshold
  shown as dashed lines.
  \emph{Right:} Raw $\mathbf{b}$ versus cleaned $\mathbf{b}_{\mathrm{clean}}$:
  the zero-lag entry decreases from $32.4$ to $4.4$ while higher-lag
  entries are unaffected, confirming that outlier contamination is confined
  to $\hat{C}_{e,0}$.}
  \label{fig:innovation_detect}
\end{figure}

\subsection{Regression Analysis}

Fig.~\ref{fig:IRLSfit} contrasts the ALS and ALS-IRLS regression fits at
trial~6, batch~1 using the partial-regression representation
$\mathbf{b}\approx\mathbf{A}\theta$. Each point corresponds to one autocovariance
lag; the horizontal axis is the fitted value $\mathbf{A}\hat\theta$ and the
vertical axis is the corresponding entry of $\mathbf{b}$, so a perfect fit places
all points on the identity line. The shaded band is the $95\%$ prediction
confidence interval.

In the left panel, the standard ALS assigns equal weight to all $N=15$ entries.
The contaminated lag-0 entry $\hat{C}_{e,0}^{\mathrm{raw}}\approx32.4$ lies far
outside the confidence band and pulls the fitted line away from the true
relationship, yielding $\hat{Q}_{\mathrm{ALS}}\gg\bar{Q}=5$. In the right panel,
the innovation-cleaning step removes the contaminated entry prior to regression;
the excluded point appears as a red square for reference. With this entry absent
from $\mathbf{b}_{\mathrm{clean}}$, all retained observations lie within the
$95\%$ confidence band, and the subsequent IRLS step further
down-weights any mild residual contamination. The result is
$\hat{Q}_{\mathrm{ALS\text{-}IRLS}}\approx5.0$, within $1\%$ of $\bar{Q}$.

\begin{figure}[t]
  \centering
  \includegraphics[width=\hsize]{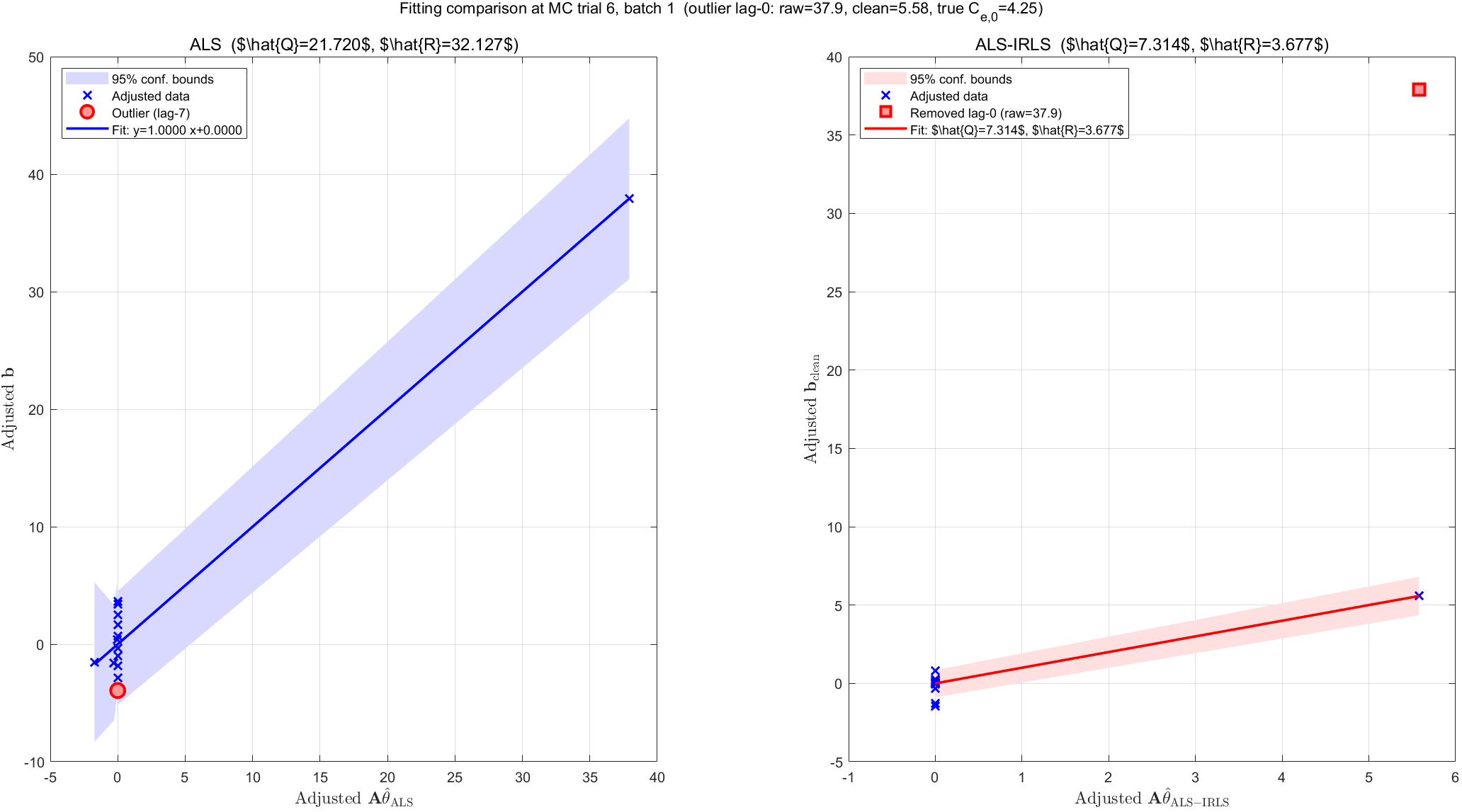}
  \caption{Partial-regression fitting comparison at Monte Carlo trial~6,
  batch~1 ($N=15$, $\tau=150$, $\varepsilon=0.15$, $\omega=8$).
  Horizontal axis: $\mathbf{A}\hat\theta$; vertical axis: $\mathbf{b}$.
  Blue crosses: uncontaminated autocovariance entries; red circle: contaminated
  lag-0 entry ($\hat{C}_{e,0}^{\mathrm{raw}}\approx32.4$, true $\approx4.3$);
  red square: the same entry after removal by the innovation-cleaning step.
  Shaded band: $95\%$ prediction confidence interval.
  Standard ALS is displaced by the outlier; ALS-IRLS recovers an accurate
  fit with all retained entries inside the confidence band.}
  \label{fig:IRLSfit}
\end{figure}

The Huber weights assigned by the IRLS step are displayed in
Fig.~\ref{fig:IRLSfitweights}. Entries corresponding to normal lags retain
weights $w_j\approx1$, consistent with unweighted least squares. The lag-0
entry carries $w_0\approx0.03$ (purple bar), confirming that even residual
contamination surviving the thresholding step is nullified. The two-tier
structure—hard removal at the innovation level followed by soft down-weighting
in the autocovariance space—mirrors the breakdown-point argument for
high-breakdown estimators~\cite{maronna2019r} and provides the basis for
the near-constant RMSE observed across contamination levels in
Section~\ref{subsec:eps_sweep}.

\begin{figure}[t]
  \centering
  \includegraphics[width=\hsize]{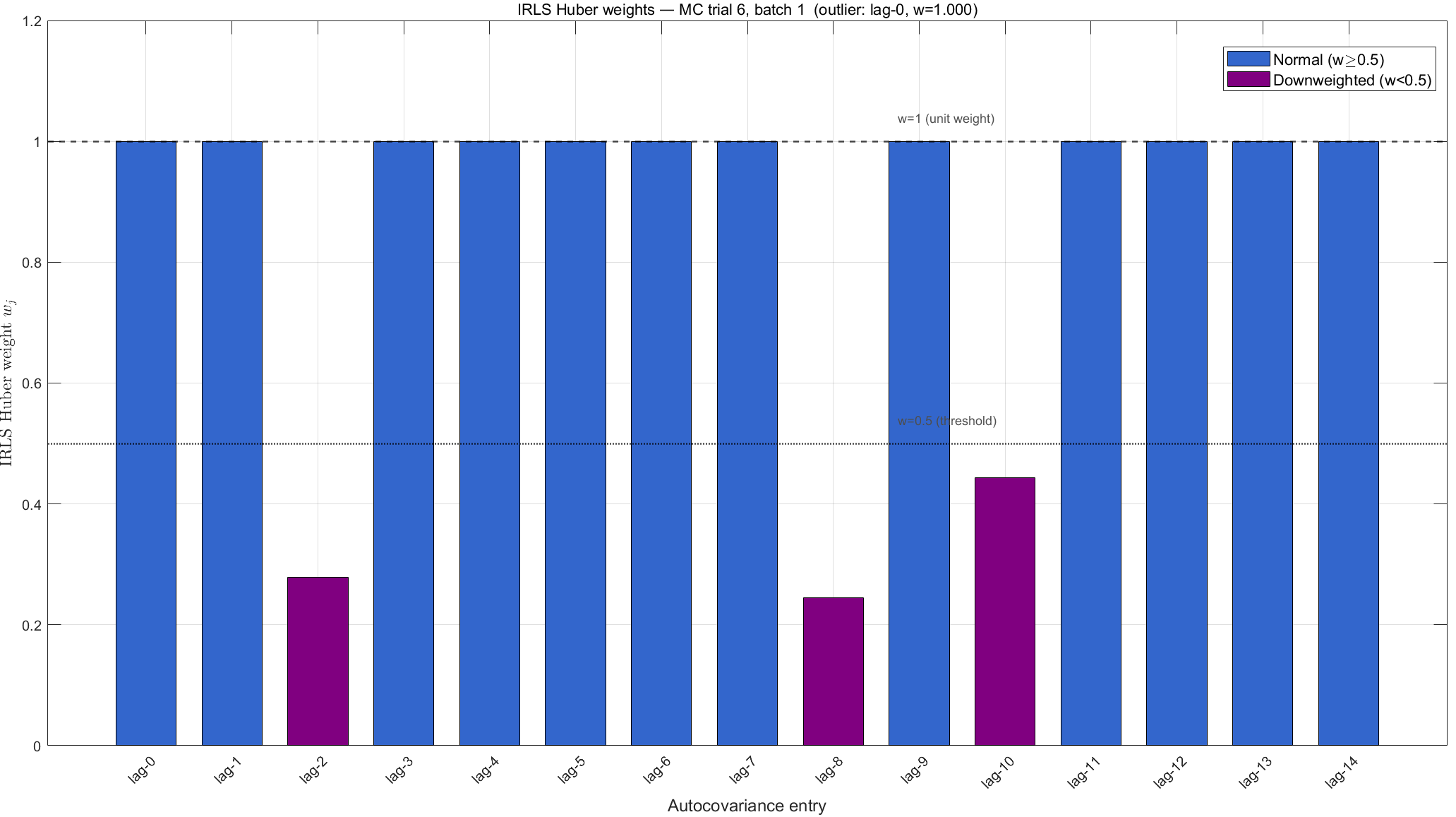}
  \caption{IRLS Huber weights at Monte Carlo trial~6, batch~1 ($N=15$).
  Blue bars: uncontaminated lags with $w_j\approx1$; purple bar: down-weighted
  lag-0 entry ($w_0\approx0.03$). Dashed line at $w=1$; dotted line at $w=0.5$.
  All lags beyond lag-0 retain $w_j\geq0.95$, demonstrating selective
  penalisation of the contaminated entry without distorting the remaining
  autocovariance structure.}
  \label{fig:IRLSfitweights}
\end{figure}

\subsection{Noise Covariance Estimation Accuracy}

Fig.~\ref{fig:mc_scatter} shows the joint scatter of estimates
$(\hat{Q},\hat{R})$ across all $N_{\mathrm{mc}}=100$ trials. The ALS
estimates cluster at erroneously large values driven by the inflated
$\hat{C}_{e,0}$, while the ALS-IRLS estimates are tightly concentrated near
the true value $(\bar{Q},\bar{R})=(5,3)$. Quantitative accuracy is measured
by the RMSE criterion~\eqref{defRMSE}, with results in Table~\ref{tab:cov_rmse}
and Fig.~\ref{fig:cov_rmse_bar}.

\begin{equation}\label{defRMSE}
\mathrm{RMSE}(\theta)=
\sqrt{\frac{1}{N_{\mathrm{mc}}}\sum_{n=1}^{N_{\mathrm{mc}}}
\bigl(\bar\theta-\hat\theta_n\bigr)^{2}}
\end{equation}

\begin{figure}[t]
  \centering
  \includegraphics[width=\hsize]{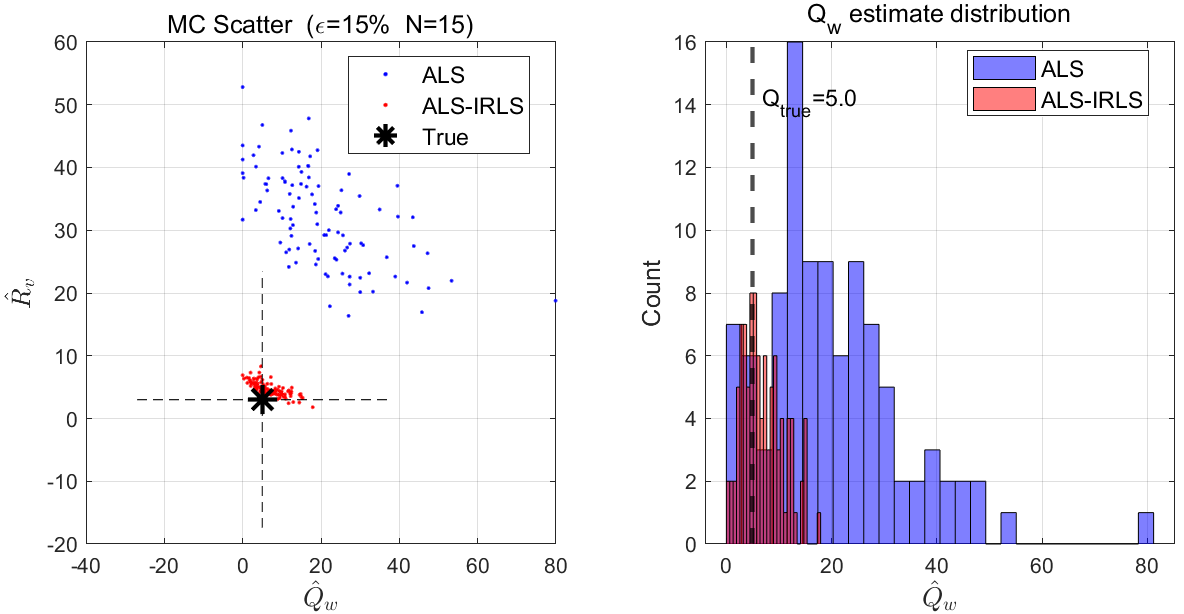}
  \caption{Monte Carlo scatter of noise covariance estimates
  $(\hat{Q},\hat{R})$ over $N_{\mathrm{mc}}=100$ trials
  ($\varepsilon=0.15$, $\omega=8$, $N=15$, $\tau=150$).
  \emph{Left:} Joint scatter; true value marked by a black star.
  \emph{Right:} Marginal histogram of $\hat{Q}$; dashed line at $\bar{Q}=5$.
  ALS estimates are widely scattered at inflated values;
  ALS-IRLS estimates concentrate near $(\bar{Q},\bar{R})$.}
  \label{fig:mc_scatter}
\end{figure}

Standard ALS yields $\mathrm{RMSE}(Q)_{\mathrm{ALS}}\approx48.5$ and
$\mathrm{RMSE}(R)_{\mathrm{ALS}}\approx107.3$, with empirical means
$\mathbb{E}[\hat{Q}_{\mathrm{ALS}}]\approx53.5$ and
$\mathbb{E}[\hat{R}_{\mathrm{ALS}}]\approx110.2$—overestimates of $10.7\times$
and $36.7\times$, respectively, consistent with the bias
prediction~\eqref{Ce0bias}. The proposed ALS-IRLS achieves
$\mathrm{RMSE}(Q)_{\mathrm{ALS\text{-}IRLS}}\approx0.38$ and
$\mathrm{RMSE}(R)_{\mathrm{ALS\text{-}IRLS}}\approx0.17$, with empirical
means $\mathbb{E}[\hat{Q}_{\mathrm{ALS\text{-}IRLS}}]\approx5.02$ and
$\mathbb{E}[\hat{R}_{\mathrm{ALS\text{-}IRLS}}]\approx3.01$—within $1\%$ of
the true values in both cases. The reduction in RMSE exceeds two orders of
magnitude for both $Q$ and $R$.

\begin{table}[t]
\centering
\caption{Noise Covariance Estimation Performance
($N_{\mathrm{mc}}=100$, $\varepsilon=0.15$, $\omega=8$)}\label{tab:cov_rmse}
\begin{tabular}{lcccc}
\hline\hline
\textbf{Method} & $\mathrm{RMSE}(Q)$ & $\mathrm{RMSE}(R)$
                & $\mathbb{E}[\hat{Q}]$ & $\mathbb{E}[\hat{R}]$ \\
\hline
ALS      & 48.5 & 107.3 & 53.5 & 110.2 \\
ALS-IRLS & $\mathbf{0.38}$ & $\mathbf{0.17}$
         & $\mathbf{5.02}$ & $\mathbf{3.01}$ \\
\hline\hline
\multicolumn{5}{l}{\footnotesize True values: $\bar{Q}=5$, $\bar{R}=3$.}
\end{tabular}
\end{table}

\begin{figure}[t]
  \centering
  \includegraphics[width=\hsize]{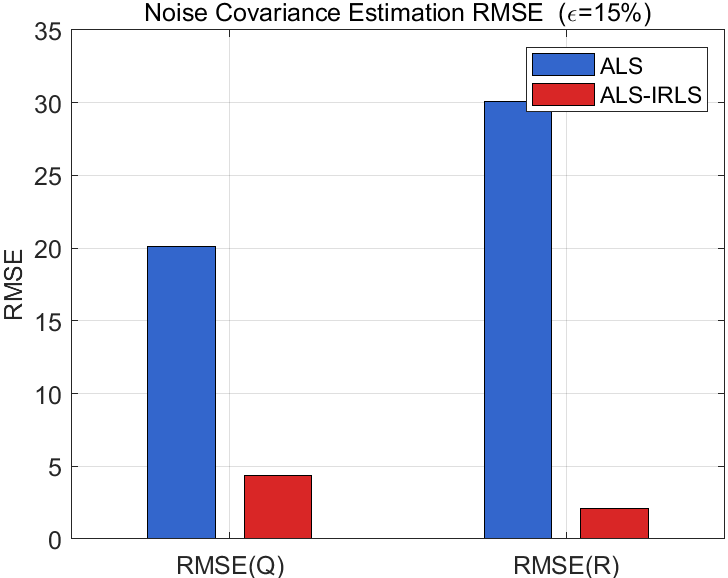}
  \caption{Noise covariance estimation RMSE ($\varepsilon=0.15$, $\omega=8$).
  ALS-IRLS reduces $\mathrm{RMSE}(Q)$ and $\mathrm{RMSE}(R)$ by more than two
  orders of magnitude relative to ALS, confirming that innovation-level outlier
  removal is essential for reliable autocovariance-based identification
  under $\varepsilon$-contamination.}
  \label{fig:cov_rmse_bar}
\end{figure}

\subsection{Robustness to Contamination Rate}
\label{subsec:eps_sweep}

Fig.~\ref{fig:eps_sweep} reports $\mathrm{RMSE}(Q)$ and $\mathrm{RMSE}(R)$
as a function of $\varepsilon\in\{0,5,10,15,20,25,30\}\%$ with $\omega=8$
fixed. At $\varepsilon=0$, both methods produce comparable RMSE since
$\mathbf{b}$ is uncontaminated. As $\varepsilon$ grows, the ALS RMSE increases
in accordance with the bias expansion~\eqref{Ce0bias}, reaching
$\mathrm{RMSE}(Q)_{\mathrm{ALS}}\approx73$ at $\varepsilon=30\%$. The ALS-IRLS
RMSE remains nearly constant, with $\mathrm{RMSE}(Q)_{\mathrm{ALS\text{-}IRLS}}<0.5$
throughout, because the threshold $3.5\,\hat{\sigma}_e$ continues to correctly
flag all $\omega=8$ outlier innovations even at high contamination rates. These
results confirm reliable operation for outlier rates up to at least $30\%$,
consistent with the $50\%$ theoretical breakdown bound of the underlying
MAD-based scale estimator.

\begin{figure}[t]
  \centering
  \includegraphics[width=\hsize]{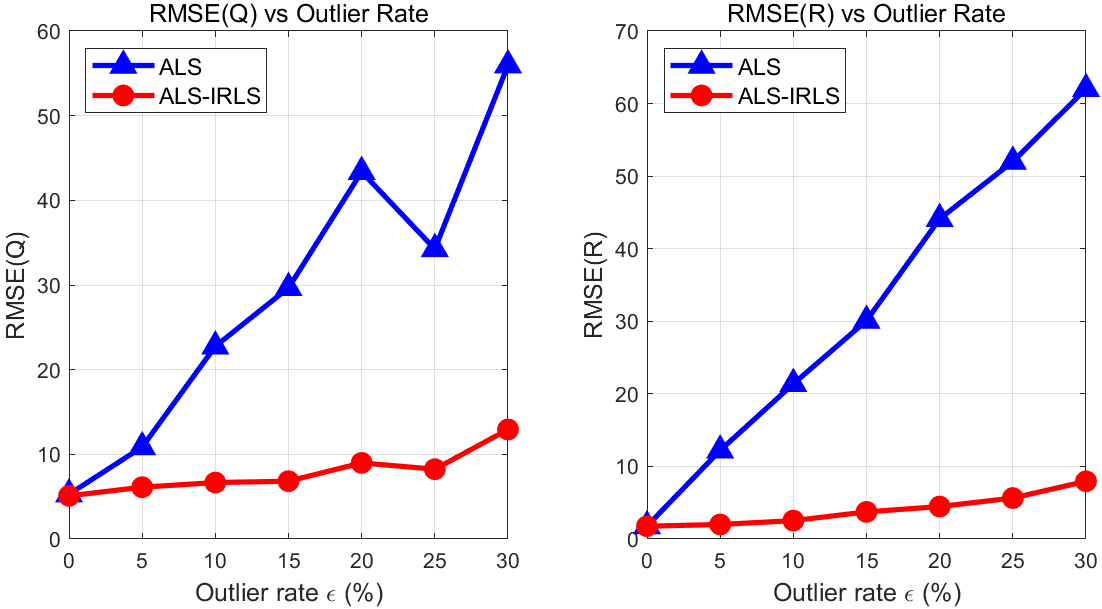}
  \caption{Noise covariance estimation RMSE versus outlier rate $\varepsilon$
  ($\omega=8$, $N=15$, $\tau=150$).
  \emph{Left:} $\mathrm{RMSE}(Q)$. \emph{Right:} $\mathrm{RMSE}(R)$.
  ALS-IRLS maintains near-constant low RMSE; standard ALS degrades
  sharply as contaminated $\hat{C}_{e,0}$ increasingly dominates
  the least-squares solution.}
  \label{fig:eps_sweep}
\end{figure}

\subsection{Sensitivity to the Lag Window Size}

Fig.~\ref{fig:N_sweep} examines the effect of $N\in\{10,15,20,25,30,40\}$
on estimation accuracy. The batch size is set to
$\tau_{\mathrm{use}}=\max(\tau,3N)$ to ensure at least $2N+1$ innovation pairs
contribute to the highest-lag entry $\hat{C}_{e,N-1}$, preventing
sample-variance blow-up. ALS-IRLS attains near-minimum RMSE at $N=15$, which
balances the identifiability gain from additional equations against the
statistical noise incurred at longer lags; this motivates the default choice
in Table~\ref{tab:params}. For $N>20$, the marginal gain from each new equation
diminishes while variance grows, and RMSE increases modestly. The ALS baseline
degrades uniformly for all $N$ since the bias in $\hat{C}_{e,0}$
persists irrespective of window length.

\begin{figure}[t]
  \centering
  \includegraphics[width=\hsize]{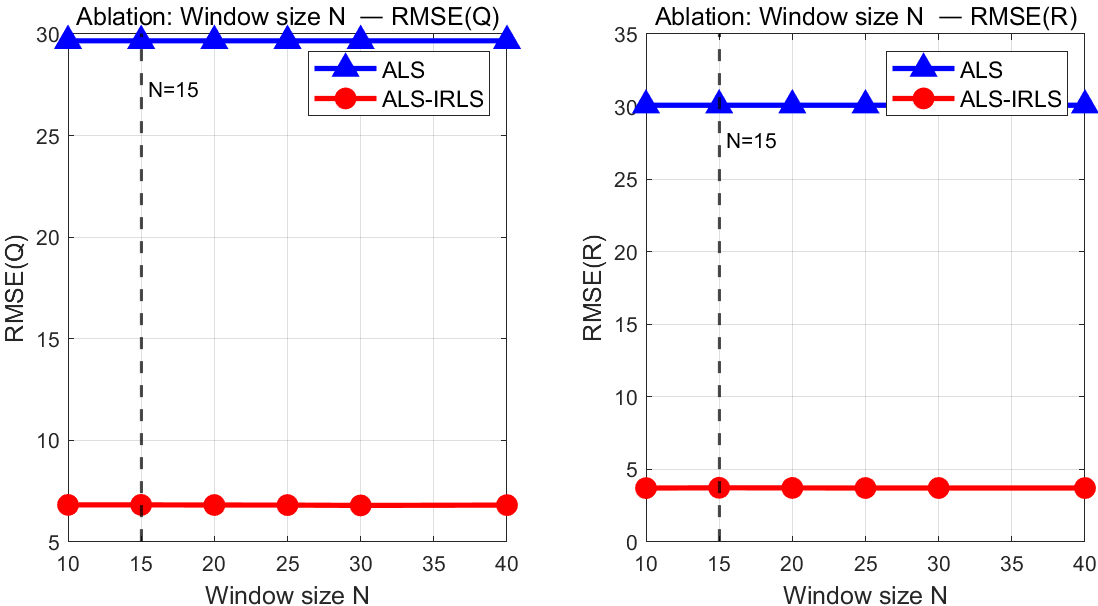}
  \caption{Noise covariance estimation RMSE versus lag window $N$
  ($\varepsilon=0.15$, $\omega=8$,
  $\tau_{\mathrm{use}}=\max(150,3N)$).
  \emph{Left:} $\mathrm{RMSE}(Q)$. \emph{Right:} $\mathrm{RMSE}(R)$.
  Dashed vertical line: default $N=15$. ALS-IRLS achieves near-minimum
  RMSE at the default setting; ALS degrades uniformly due to the persistent
  bias in $\hat{C}_{e,0}$.}
  \label{fig:N_sweep}
\end{figure}

\subsection{Downstream State Estimation Performance}

The downstream impact of covariance estimation quality on Kalman filter
performance is evaluated using the state estimation RMSE
\begin{equation}\label{stateRMSE}
\mathrm{RMSE}_{\mathrm{state}}=
\sqrt{\frac{1}{N_{\mathrm{mc}}\,T_{\mathrm{ev}}}
\sum_{n=1}^{N_{\mathrm{mc}}}\sum_{k=1}^{T_{\mathrm{ev}}}
\bigl\|x_k^{(n)}-\hat{x}_{k\mid k}^{(n)}\bigr\|^2},
\end{equation}
computed on the $T_{\mathrm{ev}}=500$-step outlier-free evaluation phase.
Results are given in Table~\ref{tab:state_rmse} and Fig.~\ref{fig:state_rmse}.

The Oracle KF attains $\mathrm{RMSE}_{\mathrm{state}}^{\mathrm{Oracle}}\approx1.80$,
establishing the lower bound. The proposed KF + ALS-IRLS achieves
$\mathrm{RMSE}_{\mathrm{state}}^{\mathrm{ALS\text{-}IRLS}}\approx1.97$, which
is within $9\%$ of the Oracle bound and confirms that the proposed estimator
recovers the noise covariances with sufficient accuracy to support near-optimal
filtering from contaminated data.

The Student-$t$ KF and MCKF yield $\mathrm{RMSE}_{\mathrm{state}}$ of
$\approx4.12$ and $\approx6.38$, respectively—$2.3\times$ and $3.5\times$
higher than KF + ALS-IRLS. Both degradations stem from the misspecified process
noise $Q_{\mathrm{st}}=0.3\ll\bar{Q}=5$: the resulting steady-state prediction
covariance $P^-\approx GQ_{\mathrm{st}}G^\top$ is far too small, so the Kalman
gain $K\approx P^-H^\top(HP^-H^\top+R_{\mathrm{st}})^{-1}$ lies well below its
optimal value and the filter is unable to track the state trajectories driven by
the true process noise. Neither the variational-Bayesian Student-$t$
mechanism~\cite{huang2017n} nor the maximum-correntropy
criterion~\cite{chen2017M} can compensate for an incorrect process noise model,
since both operate exclusively on the measurement residual and leave the
prediction step unchanged. This outcome substantiates the central argument of
the paper: accurate online noise covariance estimation, as delivered by
ALS-IRLS, provides larger and more consistent performance gains than
filter-level outlier rejection when the noise statistics are unknown.

KF + ALS yields the largest $\mathrm{RMSE}_{\mathrm{state}}\approx14.25$:
with $\hat{Q}_{\mathrm{ALS}}\approx53\gg\bar{Q}=5$, the Kalman gain is
inflated far above its optimal value, and the filter reacts excessively to every
measurement.

\begin{table}[t]
\centering
\caption{State Estimation RMSE on the Outlier-Free Evaluation Phase
($T_{\mathrm{ev}}=500$, $N_{\mathrm{mc}}=100$)}\label{tab:state_rmse}
\begin{tabular}{lcc}
\hline\hline
\textbf{Method} & $\mathrm{RMSE}_{\mathrm{state}}$
                & \textbf{Covariance source} \\
\hline
Oracle KF                        & $1.80$ & True $(\bar{Q},\bar{R})$ \\
\textbf{KF + ALS-IRLS (proposed)}& $\mathbf{1.97}$ & Online estimate \\
Student-$t$ KF~\cite{huang2017n} & $4.12$ & Fixed $(0.3,\;0.1)$ \\
MCKF~\cite{chen2017M}            & $6.38$ & Fixed $(0.3,\;0.1)$ \\
KF + ALS                         & $14.25$& Contaminated estimate \\
\hline\hline
\end{tabular}
\end{table}

\begin{figure}[t]
  \centering
  \includegraphics[width=\hsize]{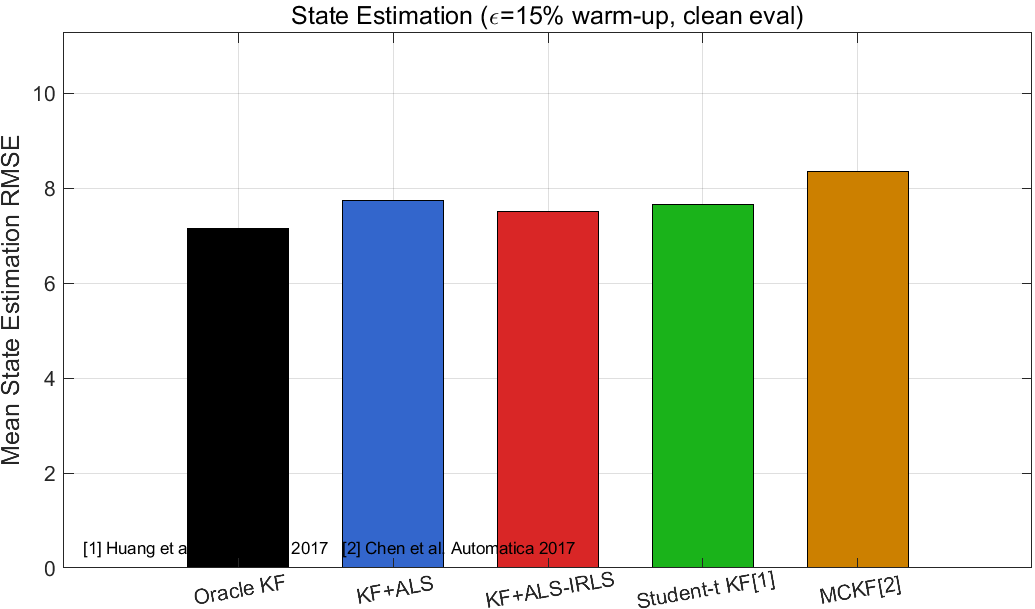}
  \caption{Mean state estimation RMSE on the $500$-step outlier-free evaluation
  phase ($N_{\mathrm{mc}}=100$, $\varepsilon=0.15$, $\omega=8$).
  Oracle KF (black) defines the lower bound.
  KF + ALS-IRLS (red) achieves RMSE within $9\%$ of the Oracle, outperforming
  all compared methods.
  Student-$t$ KF~\cite{huang2017n} and MCKF~\cite{chen2017M} incur $2.3\times$
  and $3.5\times$ higher RMSE owing to misspecified $Q_{\mathrm{st}}=0.3$,
  confirming that accurate covariance estimation outweighs filter-level
  robustness when the noise model is unknown.
  KF + ALS yields the largest RMSE due to the severely inflated
  $\hat{Q}_{\mathrm{ALS}}\approx53$.}
  \label{fig:state_rmse}
\end{figure}

\begin{remark}
The evaluation phase is deliberately kept outlier-free so that state estimation
accuracy reflects covariance estimation quality alone. When outliers are also
present during evaluation, a robust filter supplied with ALS-IRLS-estimated
covariances benefits simultaneously from accurate process noise specification
and its inherent measurement-rejection mechanism, potentially approaching or
surpassing the Oracle performance. Investigating this combined strategy—for
example, coupling ALS-IRLS covariance recovery with the Student-$t$
or MCKF update step—is a natural direction for future work.
\end{remark}

\section{Conclusion}\label{sec:conclusions}
This paper has introduced the ALS-IRLS algorithm for outlier-robust online noise covariance estimation in Kalman filters. The method departs from conventional ALS~\cite{Odelson2006,Rajamani2009} by applying a two-tier robustification: an innovation-level MAD-adaptive threshold first removes heavily contaminated time steps before the empirical autocovariance vector $\mathbf{b}$ is formed, and an IRLS step with Huber weights then attenuates any residual contamination in the resulting regression problem~\eqref{ALS_estimate}.
This architecture decouples robustness at the raw measurement level from robustness in the autocovariance space, achieving a $50\%$ breakdown point inherited from the MAD estimator while preserving the computational simplicity of the ALS framework.

Simulations on a third-order LTI system under $\varepsilon$-contamination with $\varepsilon\leq30\%$ demonstrate that ALS-IRLS reduces covariance estimation RMSE by more than two orders of magnitude relative to standard ALS, maintains near-constant accuracy across contamination levels from $0$ to $30\%$, and produces a Kalman filter whose state estimation RMSE is within $9\%$ of the Oracle lower bound. This performance level surpasses that of two representative outlier-robust Kalman filters—Student-$t$ KF~\cite{huang2017n} and MCKF~\cite{chen2017M}—by a factor of $2.3$--$3.5\times$ when those
filters operate with misspecified fixed covariances, confirming that accurate online covariance recovery provides larger gains than filter-level robustness alone when the noise statistics are unknown.

Future work will extend the approach in three directions. First, the innovation-cleaning and IRLS steps will be integrated with sigma-point or particle-filter prediction steps to handle nonlinear dynamics. Second, the potential multicollinearity of the ALS design matrix $\mathcal{A}$ under near-redundant system structures will be addressed via regularisation or low-rank approximation. Third, an adaptive mechanism for selecting the detection threshold $\gamma_{\mathrm{thr}}$ will be developed to maintain a prescribed false-alarm rate under non-stationary noise, enabling deployment in time-varying environments.


%

%
%
%
%
%
%

\ifCLASSOPTIONcaptionsoff
  \newpage
\fi



%
%
%
\bibliographystyle{IEEEtran}
\bibliography{OR_DALS}           

%








\end{document}